\documentclass[reqno, 12pt]{amsart}
%%%%%%%%%%%%%%%%%%%%%%%%%%%%%%%%%%%%%%%%%%%%%%%%%%%%%%%%%%%%%%%%%%%%%%%%%%%%%%%%%%%%%%%%%%%%%%%%%%%%%%%%%%%%%%%%%%%%%%%%%%%%
\usepackage{amssymb}
\usepackage{amsfonts}
\usepackage{srcltx}

\usepackage{cite}

\textheight=23cm \textwidth=13.5cm \topmargin0cm \oddsidemargin1cm \evensidemargin1cm \footskip1cm
\newtheorem{theorem}{Theorem}[section]
\newtheorem{lemma}[theorem]{Lemma}

\newtheorem{corollary}[theorem]{Corollary}
\theoremstyle{definition}

\theoremstyle{remark}

\newtheorem*{acknowledgement}{Acknowledgement}
\DeclareMathOperator{\diam}{diam}

\DeclareMathOperator{\conv}{conv}

\DeclareMathOperator{\WCS}{WCS}
\DeclareMathOperator{\Nor}{N}

\DeclareMathOperator{\cl}{cl}

\begin{document}
\title[]{On the fixed points of nonexpansive mappings\\
in direct sums of Banach spaces}
\author[]{Andrzej Wi\'{s}nicki}
\subjclass[2010]{47H10, 46B20, 47H09}
\address{Andrzej Wi\'{s}nicki, Institute of Mathematics, Maria Curie-Sk{\l }%
odowska University, 20-031 Lublin, Poland}
\email{awisnic@hektor.umcs.lublin.pl}
\keywords{Fixed point property, Direct sum, Nonexpansive mapping}

\begin{abstract}
We show that if a Banach space $X$ has the weak fixed point property for
nonexpansive mappings and $Y$ has the generalized Gossez-Lami Dozo property
or is uniformly convex in every direction, then the direct sum $X\oplus Y$
with a strictly monotone norm has the weak fixed point property.
The result is new even if $Y$ is finite-dimensional.
\end{abstract}

\date{}
\maketitle

\section{Introduction}

One of the central themes in metric fixed point theory is the existence of
fixed points of nonexpansive mappings. Recall that a mapping $T:C\rightarrow
C$ is nonexpansive if
\begin{equation*}
\Vert Tx-Ty\Vert \leq \Vert x-y\Vert
\end{equation*}%
for all $x,y\in C$. A Banach space $X$ is said to have the fixed point
property (FPP) if every nonexpansive self-mapping defined on a nonempty
bounded closed and convex set $C\subset X$ has a fixed point. A Banach space
$X$ is said to have the weak fixed point property (WFPP) if every
nonexpansive self-mapping defined on a nonempty weakly compact and convex
set $C\subset X$ has a fixed point.

Fixed point theory for nonexpansive mappings has its origins in the 1965
existence theorems of F. Browder, D. G\"{o}hde and W. A. Kirk. The most
general of them, Kirk's theorem \cite{Ki} asserts that all Banach spaces
with weak normal structure have WFPP. Recall that a Banach space $X$ has
weak normal structure if $r(C)<\diam C$ for all weakly compact convex
subsets $C$ of $X$ consisting of more than one point, where $r(C)=\inf_{x\in
C}\sup_{x\in C}\Vert x-y\Vert $ is the Chebyshev radius of $C$. In 1981,
Alspach \cite{Al} showed an example of a nonexpansive mapping defined on a
weakly compact convex subset of $L_{1}[0,1]$ without a fixed point, and
Maurey \cite{Ma} used the Banach space ultraproduct construction to prove
FPP for all reflexive subspaces of $L_{1}[0,1]$ as well as WFPP for $c_{0}$
and $H^{1}$. Maurey's method has been applied by numerous authors to obtain
several fixed point results. In 2003, Garc\'{\i}a Falset, Llor\'{e}ns Fuster
and Mazcu\~{n}an Navarro \cite{GLM2} solved a long-standing problem in the
theory by proving FPP for all uniformly nonsquare Banach spaces. Quite
recently, Lin \cite{Li2} showed the first example of a nonreflexive Banach
space with FPP, and Dom\'{\i}nguez Benavides \cite{Do3} proved
that every reflexive Banach space can be renormed to have FPP, thus
solving other classical problems in metric fixed point theory. It is still
unknown whether reflexivity (or even superreflexivity) implies the fixed
point property. For a detailed exposition of metric fixed point theory we
refer the reader to \cite{AyDoLo, GoKi, KiSi}.

The problem of whether FPP or WFPP is preserved under direct sums of Banach
spaces has been studied since the 1968 Belluce--Kirk--Steiner theorem \cite%
{BKS}, which states that a direct sum of two Banach spaces with normal
structure, endowed with the maximum norm, also has normal structure. In
1984, Landes \cite{La1} showed that normal structure is preserved under a
large class of direct sums including all $\ell _{p}^{N}$-sums, $1<p\leq
\infty $, but not under $\ell _{1}^{N}$-direct sums (see \cite{La2}).
Nowadays, there are many results concerning permanence properties of
conditions which imply normal structure, see \cite{DhSa, MaPiXu, SiSm} and
references therein. Several recent papers consider the general case, but
always under additional geometrical assumptions, see \cite{BePr0,BePr,DKK,
DoLiTu,DoSa,KaSaTa,KaTa,PrWi,Wi3}.

Recently, two general fixed point theorems in direct sums were proved in
\cite{PrWi}. In the present paper we are able to remove additional
assumptions imposed on the space $X$ in that paper. We show in Section 3
that if a Banach space $X$ has WFPP and $Y$ has the generalized Gossez-Lami
Dozo property introduced in \cite{Ji} (see Section 2 for the definition),
then the direct sum $X\oplus Y$ with respect to a strictly monotone norm has
WFPP. The result is new even if $Y$ is a finite-dimensional space and in
this case answers a question of Khamsi \cite{Kh} for strictly monotone
norms. Some consequences of the main theorem are presented in Section 4. In
particular, we prove that $X\oplus Y$ has WFPP whenever $X$ has WFPP and $Y$
is uniformly convex in every direction.

\section{Preliminaries}

Let us recall several properties of a Banach space $X$ which are sufficient
for weak normal structure. The normal structure coefficient is given by
\begin{equation*}
\Nor(X)=\inf \left\{ \diam A/r(A)\right\} ,
\end{equation*}%
where the infimum is taken over all bounded convex sets $A\subset X$ with $%
\diam A>0$ and $r(A)$ denotes the Chebyshev radius of $A$ (relative to
itself). Assuming that $X$ does not have the Schur property, we put
\begin{equation*}
\WCS(X)\,=\inf \left\{ \diam_{a}(x_{n})/r_{a}(x_{n})\right\} ,
\end{equation*}%
where the infimum is taken over all sequences $(x_{n})$ which converge to $0$
weakly but not in norm. Here
\begin{equation*}
\diam_{a}(x_{n})=\lim_{n\rightarrow \infty }\sup_{k,l\geq n}\Vert
x_{k}-x_{l}\Vert
\end{equation*}%
denotes the asymptotic diameter of $(x_{n})$ and
\begin{equation*}
r_{a}(x_{n})=\inf \left\{ \limsup_{n\rightarrow \infty }\Vert x_{n}-x\Vert
:x\in \overline{\conv}(x_{n})_{n=1}^{\infty }\right\}
\end{equation*}%
denotes the asymptotic radius of $(x_{n})$. We say that a Banach space $X$
has uniform normal structure if $N(X)>1$ and weak uniform normal structure
(or satisfies Bynum's condition) if $\WCS(X)>1$. A weaker property was
introduced in \cite{Ji}. A Banach space $X$ is said to have the generalized
Gossez-Lami Dozo property (GGLD, in short) if
\begin{equation*}
\limsup_{m\rightarrow \infty }\limsup_{n\rightarrow \infty }\Vert
x_{n}-x_{m}\Vert >1
\end{equation*}%
whenever $(x_{n})$ converges weakly to $0$ and $\lim_{n\rightarrow \infty
}\Vert x_{n}\Vert =1.$ It is known that $\Nor(X)>1$ $%
\Rightarrow $ $\WCS(X)>1$ $\Rightarrow $ GGLD $\Rightarrow $ weak normal
structure and that the GGLD property is equivalent
to the so-called property asymptotic (P) (see, e.g., \cite{SiSm}).

Recall that a norm $\left\| \cdot \right\|$ on $\mathbb{R}^{2}$ is said to
be monotone if
\begin{equation*}
\| ( x_{1},y_{1}) \|\leq \| ( x_{2},y_{2}) \| \ \ \ \text{whenever}\ \ 0\leq
x_{1}\leq x_{2}, 0\leq y_{1}\leq y_{2}.
\end{equation*}
A norm $\left\| \cdot \right\|$ is said to be strictly monotone if
\begin{align*}
\ \| ( x_{1},y_{1}) \|<\| ( x_{2},y_{2}) \| \ \ \ \text{whenever} \ \ &
0\leq x_{1}\leq x_{2}, 0\leq y_{1}<y_{2} \ \  \\
\text{or} \ \ & 0\leq x_{1}<x_{2}, 0\leq y_{1}\leq y_{2}.
\end{align*}
It is easy to see that $\ell _{p}^{2}$-norms, $1\leq p<\infty ,$ are
strictly monotone.

Let $Z$ be a normed space $(\mathbb{R}^{2},\left\| \cdot \right\|_Z)$. We
shall write $X\oplus_{Z}Y$ for the $Z$-direct sum of Banach spaces $X$, $Y$
with the norm
\begin{equation*}
\| ( x,y) \| =\| ( \| x\| ,\| y\|) \|_Z \, ,
\end{equation*}
where $( x,y) \in X\times Y$. The following lemma was proved in \cite[Lemma 4%
]{PrWi}. Similar arguments can be found in \cite{GaLl, SiSm}.

\begin{lemma}
\label{y=0} Let $X\oplus_{Z}Y$ be a direct sum of Banach spaces $X$, $Y$
with respect to a strictly monotone norm. Assume that $Y$ has the GGLD
property, the vectors $w_{n}=(x_{n},y_{n})\in X\oplus _{p}Y$ tend weakly to
0 and
\begin{equation*}
\lim_{n,m\rightarrow \infty ,n\neq m}\Vert w_{n}-w_{m}\Vert
=\lim_{n\rightarrow \infty }\Vert w_{n}\Vert .
\end{equation*}%
Then $\lim_{n\rightarrow \infty }\Vert y_{n}\Vert =0$.
\end{lemma}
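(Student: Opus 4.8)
The plan is to argue by contradiction, using the GGLD information on the $Y$-coordinate to manufacture indices at which $\Vert w_n-w_m\Vert$ overshoots $R:=\lim_n\Vert w_n\Vert$. First I would pass to a subsequence along which the scalar sequences $\Vert x_n\Vert$ and $\Vert y_n\Vert$ converge, say $\Vert x_n\Vert\to a$ and $\Vert y_n\Vert\to b$; this is legitimate because weak convergence to $0$ and the double-limit hypothesis both persist along subsequences, and continuity of $\Vert\cdot\Vert_Z$ forces $R=\Vert(a,b)\Vert_Z$. Suppose, contrary to the claim, that $b>0$; I aim to contradict $\lim_{n,m\to\infty,\,n\neq m}\Vert w_n-w_m\Vert=R$.

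For the $Y$-coordinate I would normalize, setting $u_n=y_n/\Vert y_n\Vert$ for large $n$. Since $y_n\rightharpoonup 0$ and $\Vert y_n\Vert\to b>0$, the vectors $u_n$ tend weakly to $0$ with $\Vert u_n\Vert=1$, so the GGLD property supplies a constant $D:=\limsup_m\limsup_n\Vert u_n-u_m\Vert>1$. Because $\Vert y_n\Vert\to b$, one has $\Vert y_n-y_m\Vert=b\,\Vert u_n-u_m\Vert+o(1)$ as $\min(n,m)\to\infty$, and hence $\limsup_m\limsup_n\Vert y_n-y_m\Vert=bD>b$.

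For the $X$-coordinate the key point is that a lower bound holds for \emph{all} large $n$, not merely along a subsequence: fixing $m$ and letting $n\to\infty$ we have $x_n-x_m\rightharpoonup -x_m$, so weak lower semicontinuity of the norm gives $\liminf_n\Vert x_n-x_m\Vert\ge\Vert x_m\Vert\to a$. Now fix $\epsilon>0$ and choose $m$ large, taken from a sequence realizing the outer $\limsup$ above, so that $\limsup_n\Vert y_n-y_m\Vert\ge bD-\epsilon$ and $\Vert x_m\Vert\ge a-\epsilon$; then choose $n>m$ large along the subsequence realizing the inner $\limsup$, so that $\Vert y_n-y_m\Vert\ge bD-2\epsilon$, while automatically $\Vert x_n-x_m\Vert\ge a-2\epsilon$ for all such large $n$. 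Monotonicity of $\Vert\cdot\Vert_Z$ then gives $\Vert w_n-w_m\Vert=\Vert(\Vert x_n-x_m\Vert,\Vert y_n-y_m\Vert)\Vert_Z\ge\Vert(a-2\epsilon,\,bD-2\epsilon)\Vert_Z$, whose right-hand side tends to $\Vert(a,bD)\Vert_Z$ as $\epsilon\to 0$. Since $bD>b$, \emph{strict} monotonicity yields $\Vert(a,bD)\Vert_Z>\Vert(a,b)\Vert_Z=R$, so for a suitable fixed $\epsilon$ there are arbitrarily large indices $n\neq m$ with $\Vert w_n-w_m\Vert>R+\delta$ for some $\delta>0$. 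This contradicts the double-limit hypothesis, forcing $b=0$, i.e. $\lim_n\Vert y_n\Vert=0$.

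I would flag the synchronization in the third paragraph as the main obstacle: the GGLD estimate for the $Y$-part is available only along a double subsequence, so it is essential that the matching bound $\Vert x_n-x_m\Vert\ge a-2\epsilon$ holds for \emph{every} large $n$ (it arises from a $\liminf$ via weak lower semicontinuity) and can therefore be imposed simultaneously with the $Y$-estimate at indices large enough to trigger the double limit. Strict, rather than merely plain, monotonicity of the coordinate norm is exactly what converts the gain $bD>b$ in the second coordinate into a strict excess over $R$.
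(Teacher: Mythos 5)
Your proof is correct: the normalization $u_n=y_n/\Vert y_n\Vert$ to invoke GGLD, the weak lower semicontinuity bound $\liminf_n\Vert x_n-x_m\Vert\ge\Vert x_m\Vert$ for the first coordinate, and the use of strict monotonicity to convert $bD>b$ into $\Vert(a,bD)\Vert_Z>R$ are exactly the ingredients of the argument the paper relies on, and your synchronization of the ``for all large $n$'' $X$-estimate with the ``along a double subsequence'' $Y$-estimate is handled properly. Note that the paper itself gives no proof but cites \cite[Lemma 4]{PrWi}, whose argument proceeds along essentially the same lines as yours, so there is nothing genuinely different to compare.
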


\section{The Main Theorem}

The following observation is crucial for many fixed point existence theorems
for nonexpansive mappings. Assume that there exists a nonexpansive mapping $%
T:C\rightarrow C$ without a fixed point, where $C$ is a nonempty weakly
compact convex subset of a Banach space $X$. Let
\begin{equation*}
\mathcal{F}=\left\{ K\subset C:K\;is\;nonempty,\;closed,\;convex\;and\;\
T(K)\subset K\right\} .
\end{equation*}%
From the weak compactness of $C$, any decreasing chain of elements in $%
\mathcal{F}$ has a nonempty intersection which belongs to $\mathcal{F}$. By
the Kuratowski--Zorn lemma, there exists a minimal (in the sense of
inclusion) convex and weakly compact set $K\subset C$ which is invariant
under $T$ and which is not a singleton. Let $(x_{n})$ be an approximate
fixed point sequence for $T$ in $K$, i.e., $\lim_{n\rightarrow \infty }\Vert
Tx_{n}-x_{n}\Vert =0.$ It was proved independently by Goebel \cite{Go} and
Karlovitz \cite{Ka} that
\begin{equation*}
\lim_{n\rightarrow \infty }\Vert x_{n}-x\Vert =\diam K
\end{equation*}%
for every $x\in K$. A fruitful approach to the fixed point problem is to use
this special feature of minimal invariant sets.

Let $T:K\rightarrow K$ be a nonexpansive mapping, where $K$ is a weakly
compact convex subset of the direct sum $X\oplus _{Z}Y$ with respect to a
strictly monotone norm, which is minimal invariant for $T$.

Under suitable conditions imposed on the Banach space $Y$, we will show that
$K$ is isometric to a subset of $X$, thus proving, that $X\oplus _{Z}Y$ has
WFPP whenever $X$ does. To this end we first construct, for every
integer $k\geq 1$, an appropriate family of subsets of $K$ as follows.

\begin{lemma}
\label{sets} Assume that $T:K\rightarrow K$ is a nonexpansive mapping
defined on a weakly compact convex subset $K$ of $X\oplus _{Z}Y$, which is
minimal invariant for $T$ and $\diam K=1$. Let $(w_{n})=((w_{n}^{\prime
},w_{n}^{\prime \prime }))$ be an approximate fixed point sequence for $T$
in $K$ weakly converging to $(0,0)\in K$ and $\lim_{n\rightarrow \infty
}\Vert w_{n}^{\prime \prime }\Vert =0.$ Fix an integer $k\geq 1$ and a
sequence $(\varepsilon _{n})$ in $(0,1).$ Then there exist a subsequence $%
(v_{n})=(x_{n},y_{n})$ of $(w_{n})$ and a family $\left\{ D_{j}^{i}\right\}
_{1\leq j\leq k,i\geq 1}$ of relatively compact convex subsets of $K$ such
that

\begin{enumerate}
\item[(i)] $\Vert Tv_{i}-v_{i}\Vert <\varepsilon _{i},$

\item[(ii)] $\Vert y_{i}\Vert <\varepsilon _{i},$

\item[(iii)] $\Vert v_{i}-z\Vert >1-\varepsilon _{i}$ for all $z\in
D_{k}^{i-1},$

\item[(iv)] $D_{1}^{i}=\conv(D_{1}^{i-1}\cup \left\{ v_{i}\right\} ),$

\item[(v)] $D_{j+1}^{i}=\conv(D_{j}^{i}\cup T(D_{j}^{i})),$
\end{enumerate}

for every $i\geq 1$ and $1\leq j\leq k-1$ $(D_{1}^{0}=D_{k}^{0}=\emptyset ).$
\end{lemma}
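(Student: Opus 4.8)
The plan is to construct the subsequence $(v_n)$ together with the sets $D_j^i$ by induction on $i$, at each stage choosing $v_i$ from a sufficiently far tail of $(w_n)$ and then defining $D_1^i,\dots,D_k^i$ by the prescribed formulas (iv) and (v). The base case is handled by the convention $D_1^0=D_k^0=\emptyset$, which makes (iii) vacuous at $i=1$; note that the intermediate sets $D_2^0,\dots,D_{k-1}^0$ are never needed, since step $i=1$ only refers to $D_1^0$ (in (iv)) and $D_k^0$ (in (iii)).

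For the inductive step, suppose $v_1,\dots,v_{i-1}$ have been chosen and all sets $D_j^m$ with $m\le i-1$ constructed, so that $D_k^{i-1}$ is available. The crucial structural fact, which I would establish along the induction, is that every $D_j^m$ is relatively compact: $D_1^{i-1}=\conv\{v_1,\dots,v_{i-1}\}$ is the convex hull of a finite set, hence compact; and if $D_j^{i-1}$ is relatively compact, then $T(D_j^{i-1})$ is relatively compact as the image of a relatively compact set under the continuous (nonexpansive) map $T$, their union is relatively compact, and by Mazur's theorem the convex hull $D_{j+1}^{i-1}$ is relatively compact as well.

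With $\overline{D_k^{i-1}}$ compact, I would invoke the Goebel--Karlovitz lemma recalled above: since $(w_n)$ is an approximate fixed point sequence in the minimal invariant set $K$ with $\diam K=1$, we have $\lim_{n\to\infty}\Vert w_n-z\Vert =1$ for every $z\in K$. I would then upgrade this to a uniform statement over the compact set $\overline{D_k^{i-1}}$: cover it by finitely many balls of radius $\delta<\varepsilon_i/2$ centred at points $z_1,\dots,z_m\in\overline{D_k^{i-1}}$, choose $N$ so large that $\Vert w_n-z_l\Vert >1-\varepsilon_i/2$ for all $n\ge N$ and all $l$, and conclude by the triangle inequality that $\Vert w_n-z\Vert >1-\varepsilon_i$ for every $z\in\overline{D_k^{i-1}}\supseteq D_k^{i-1}$. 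Since $\Vert Tw_n-w_n\Vert \to 0$ and $\Vert w_n''\Vert \to 0$ by hypothesis, I can enlarge $N$ so that the analogues of (i) and (ii) also hold for $n\ge N$. Picking any index exceeding the one used for $v_{i-1}$ and at least $N$, and letting $v_i=(x_i,y_i)$ be the corresponding term of $(w_n)$, secures (i), (ii), (iii) while keeping $(v_n)$ a genuine subsequence.

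Finally I would set $D_1^i=\conv(D_1^{i-1}\cup\{v_i\})$ and $D_{j+1}^i=\conv(D_j^i\cup T(D_j^i))$ for $1\le j\le k-1$, so that (iv) and (v) hold by definition, and verify that each is a relatively compact convex subset of $K$: convexity is built into the construction, containment in $K$ follows from $v_i\in K$, $T(K)\subset K$, and convexity of $K$, and relative compactness follows exactly as in the verification above. The main obstacle is the passage from the pointwise Goebel--Karlovitz limit to its uniform form on $\overline{D_k^{i-1}}$; this is precisely the step where relative compactness of $D_k^{i-1}$ is indispensable, since pointwise convergence alone would not let us separate $v_i$ from all of $D_k^{i-1}$ simultaneously.
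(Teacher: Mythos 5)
Your proposal is correct and follows essentially the same route as the paper: an induction on $i$ in which $v_i$ is chosen from a far tail of $(w_n)$ using the Goebel--Karlovitz lemma together with the relative compactness of $D_k^{i-1}$, and the sets are then defined by the formulas (iv) and (v), with relative compactness propagated via continuity of $T$ and Mazur's theorem. The only difference is one of detail: the paper compresses the uniformization of the pointwise Goebel--Karlovitz limit over the compact set into a parenthetical remark, whereas you carry out the finite-covering argument explicitly.
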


\begin{proof}
We proceed by induction on i. Since $\Vert Tw_{n}-w_{n}\Vert $ and $\Vert
w_{n}^{\prime \prime }\Vert $ converge to $0,$ we can choose $%
v_{1}=w_{n_{1}}=(x_{1},y_{1})$ in such a way that $\Vert Tv_{1}-v_{1}\Vert
<\varepsilon _{1}$ and $\Vert y_{1}\Vert <\varepsilon _{1}.$ Let us put
\begin{equation*}
D_{1}^{1}=\left\{ v_{1}\right\}
\end{equation*}%
and, for a given relatively compact convex set $D_{j}^{1},1\leq j<k$,
\begin{equation*}
D_{j+1}^{1}=\conv(D_{j}^{1}\cup T(D_{j}^{1})).
\end{equation*}%
By induction on $j$, we obtain a family $\left\{
D_{1}^{1},...,D_{k}^{1}\right\} $ of relatively compact convex subsets of $K$
which satisfies the desired conditions.

Now suppose that we have chosen $n_{1}<...<n_{l}$ $(l\geq 1),$ $%
v_{i}=w_{n_{i}}=(x_{i},y_{i}),1\leq i\leq l,$ and a family $\left\{
D_{j}^{i}\right\} _{1\leq j\leq k,1\leq i\leq l}$ of relatively compact
convex subsets of $K$ such that the conditions (i)-(v) are satisfied for
every $1\leq i\leq l$ and $1\leq j\leq k-1.$ Then, there exist $%
n_{l+1}>n_{l} $, $v_{l+1}=w_{n_{l+1}}=(x_{l+1},y_{l+1})$ such that $\Vert
Tv_{l+1}-v_{l+1}\Vert <\varepsilon _{l+1}$, $\Vert y_{l+1}\Vert <\varepsilon
_{l+1}$ and $\Vert v_{l+1}-z\Vert >1-\varepsilon _{l+1}$ for all $z\in
D_{k}^{l}$ (the last inequality follows from the Goebel-Karlovitz lemma and
the relative compactness of $D_{k}^{l}$). Let us put
\begin{equation*}
D_{1}^{l+1}=\conv(D_{1}^{l}\cup \left\{ v_{l+1}\right\} )
\end{equation*}%
and, for a given relatively compact convex set $D_{j}^{l+1},1\leq j<k$,
\begin{equation*}
D_{j+1}^{l+1}=\conv(D_{j}^{l+1}\cup T(D_{j}^{l+1})).
\end{equation*}%
Then, by induction with respect to $j$, we obtain a family $\left\{
D_{1}^{l+1},...,D_{k}^{l+1}\right\} $ of relatively compact convex subsets
of $K$ which satisfies the desired conditions.

By induction on $i$, the lemma follows.
\end{proof}

We are now going to prove that for a sequence $(\varepsilon _{n}(k))$, if $%
u=(a,b)\in \bigcup\nolimits_{i\mathbb{=}1}^{\infty }D_{k}^{i}(k)$ and $k$ is
large, then $b$ is close to $0$. We need the following lemma.

\begin{lemma}
\label{close1} Assume that a sequence $(v_{n})=(x_{n},y_{n})$ and a family $%
\left\{ D_{j}^{i}\right\} _{1\leq j\leq k,i\geq 1}$ of relatively compact
convex subsets of $K$ are given as in Lemma \ref{sets}. Then, for every $%
1\leq j\leq k$, $i\geq 1$ and $u\in D_{j}^{i+1}$, there exists $z\in
D_{j}^{i}$ such that
\begin{equation*}
\left\Vert z-u\right\Vert +\left\Vert u-v_{i+1}\right\Vert \leq \left\Vert
z-v_{i+1}\right\Vert +3(j-1)\varepsilon _{i+1}.
\end{equation*}
\end{lemma}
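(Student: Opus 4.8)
The plan is to induct on $j$, keeping $i$ fixed but arbitrary, and to exploit condition (iii) of Lemma~\ref{sets} together with the normalization $\diam K=1$ at every step. Throughout, write $v=v_{i+1}$ and $\varepsilon=\varepsilon_{i+1}$, and recall that the sets are nested, $D_1^i\subseteq D_2^i\subseteq\dots\subseteq D_k^i$, because $D_{j+1}^i=\conv(D_j^i\cup T(D_j^i))\supseteq D_j^i$. The base case $j=1$ is immediate: since $D_1^i$ is convex, every $u\in D_1^{i+1}=\conv(D_1^i\cup\{v\})$ can be written as $u=tz+(1-t)v$ with $z\in D_1^i$ and $t\in[0,1]$, so $u$ lies on the segment from $z$ to $v$ and $\|z-u\|+\|u-v\|=\|z-v\|$; this is exactly the asserted inequality with vanishing error term $3(1-1)\varepsilon$.

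For the inductive step I would pass from index $j$ to $j+1$, assuming $j+1\le k$. Given $u\in D_{j+1}^{i+1}=\conv(D_j^{i+1}\cup T(D_j^{i+1}))$, I would first write it as a finite convex combination $u=\sum_m\lambda_m p_m$ of generators $p_m\in D_j^{i+1}\cup T(D_j^{i+1})$. The goal is to attach to each generator a point $z_m\in D_{j+1}^i$ obeying the uniform bound
\begin{equation*}
\|z_m-p_m\|+\|p_m-v\|\le\|z_m-v\|+(3j-1)\varepsilon,
\end{equation*}
and then to set $z=\sum_m\lambda_m z_m\in D_{j+1}^i$. If $p_m\in D_j^{i+1}$, the inductive hypothesis at index $j$ furnishes $z_m\in D_j^i\subseteq D_{j+1}^i$ with the even smaller error $3(j-1)\varepsilon$. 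If $p_m=Tp_m'$ with $p_m'\in D_j^{i+1}$, I would apply the inductive hypothesis to $p_m'$ to obtain $z_m'\in D_j^i$ and then take $z_m=Tz_m'\in T(D_j^i)\subseteq D_{j+1}^i$; nonexpansiveness of $T$, the estimate $\|Tv-v\|<\varepsilon$ coming from $v$ being an approximate fixed point, and the inductive hypothesis give
\begin{equation*}
\|Tz_m'-Tp_m'\|+\|Tp_m'-v\|\le\|z_m'-v\|+3(j-1)\varepsilon+\varepsilon .
\end{equation*}

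The crux, and the step I expect to be the main obstacle, is that a naive convexification fails: convexity of the norm yields $\|z-v\|\le\sum_m\lambda_m\|z_m-v\|$, which points the wrong way, and similarly one must compare $\|z_m'-v\|$ with $\|Tz_m'-v\|$. Both difficulties are resolved by condition (iii): since $j+1\le k$, all the points $z_m',\,Tz_m',\,z_m,\,z$ lie in $D_k^i$, so $\|w-v\|>1-\varepsilon$ for each such $w$, while $\diam K=1$ forces $\|w-v\|\le1$. Thus every one of these distances lies in $(1-\varepsilon,1]$, which gives both $\|z_m'-v\|<\|Tz_m'-v\|+\varepsilon$ (upgrading the previous display to the uniform bound with error $(3j-1)\varepsilon$) and $\sum_m\lambda_m\|z_m-v\|\le1<\|z-v\|+\varepsilon$. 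Combining the uniform bound across generators, the triangle inequalities $\|z-u\|\le\sum_m\lambda_m\|z_m-p_m\|$ and $\|u-v\|\le\sum_m\lambda_m\|p_m-v\|$, and this last squeezing estimate yields
\begin{equation*}
\|z-u\|+\|u-v\|\le\sum_m\lambda_m\|z_m-v\|+(3j-1)\varepsilon<\|z-v\|+3j\varepsilon ,
\end{equation*}
which is the claim at index $j+1$ since $3j=3((j+1)-1)$. Tracking the three separate copies of $\varepsilon$ — one from $\|Tv-v\|$, one from swapping $\|z_m'-v\|$ for $\|Tz_m'-v\|$, and one from the convexification — explains precisely why the error is allowed to grow by $3\varepsilon$ at each level.
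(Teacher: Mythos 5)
Your proof is correct and takes essentially the same route as the paper's: the same induction on $j$ with the exact base case $u=tz+(1-t)v_{i+1}$, the same per-generator estimate via nonexpansiveness plus $\Vert Tv_{i+1}-v_{i+1}\Vert<\varepsilon_{i+1}$, and the same use of condition (iii) together with $\diam K=1$ to absorb both the swap from $\Vert z_m'-v_{i+1}\Vert$ to $\Vert Tz_m'-v_{i+1}\Vert$ and the convexification, each costing one $\varepsilon_{i+1}$. The only cosmetic difference is that you fold the paper's three cases into a single decomposition into generators, which is precisely what the paper's case $3^{\circ}$ does after treating the two generator types separately.
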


\begin{proof}

Fix $i\geq 1.$ We proceed by induction with respect to $j$. For $j=1$ and $%
u\in D_{1}^{i+1}=\conv(D_{1}^{i}\cup \left\{ v_{i+1}\right\} )$ there exists
$z\in D_{1}^{i}$ such that
\begin{equation*}
\left\Vert z-u\right\Vert +\left\Vert u-v_{i+1}\right\Vert =\left\Vert
z-v_{i+1}\right\Vert .
\end{equation*}

Now fix $1\leq j<k$ and suppose that for every $u\in D_{j}^{i+1}$ there
exists $z\in D_{j}^{i}$ such that
\begin{equation}
\left\Vert z-u\right\Vert +\left\Vert u-v_{i+1}\right\Vert \leq \left\Vert
z-v_{i+1}\right\Vert +3(j-1)\varepsilon _{i+1}.  \label{step3_U}
\end{equation}%
Let
\begin{equation*}
u\in D_{j+1}^{i+1}=\conv(D_{j}^{i+1}\cup T(D_{j}^{i+1})).
\end{equation*}%
Consider three cases.

$1^{\circ}$ The inductive step is obvious if $u\in D_{j}^{i+1}.$

$2^{\circ }$ Let $u\in T(D_{j}^{i+1})$. Then $u=T\bar{u}$ for some $\bar{u}%
\in D_{j}^{i+1}$ and, by assumption, there exists $\bar{z}\in D_{j}^{i}$
such that
\begin{equation*}
\left\Vert \bar{z}-\bar{u}\right\Vert +\left\Vert \bar{u}-v_{i+1}\right\Vert
\leq \left\Vert \bar{z}-v_{i+1}\right\Vert +3(j-1)\varepsilon _{i+1}.
\end{equation*}%
Let $z=T\bar{z}\in D_{j+1}^{i}\subset D_{k}^{i}$. Then
\begin{align}
& \left\Vert z-u\right\Vert +\left\Vert u-v_{i+1}\right\Vert \leq \left\Vert
T\bar{z}-T\bar{u}\right\Vert +\left\Vert T\bar{u}-Tv_{i+1}\right\Vert
+\left\Vert Tv_{i+1}-v_{i+1}\right\Vert   \notag \\
& <\left\Vert \bar{z}-\bar{u}\right\Vert +\left\Vert \bar{u}%
-v_{i+1}\right\Vert +\varepsilon _{i+1}\leq \left\Vert \bar{z}%
-v_{i+1}\right\Vert +(3j-2)\varepsilon _{i+1}  \label{step3b_U} \\
& <\left\Vert z-v_{i+1}\right\Vert +(3j-1)\varepsilon _{i+1},  \notag
\end{align}%
since, by (i), $\left\Vert Tv_{i+1}-v_{i+1}\right\Vert <\varepsilon _{i+1}$
and, by (iii), $\left\Vert z-v_{i+1}\right\Vert >1-\varepsilon _{i+1}\geq
\left\Vert \bar{z}-v_{i+1}\right\Vert -\varepsilon _{i+1}$ $(\diam K=1).$

$3^{\circ }$ Let $u=\sum\nolimits_{s=1}^{t}\lambda _{s}u_{s}$ for some $%
u_{s}\in D_{j}^{i+1}\cup T(D_{j}^{i+1}),\lambda _{s}\in \left[ 0,1\right]
,1\leq s\leq t\in \mathbb{N}$, $\sum\nolimits_{s=1}^{t}\lambda _{s}=1.$
Then, by (\ref{step3_U}) or (\ref{step3b_U}), there exist $%
z_{1},...,z_{t}\in D_{j+1}^{i}$ such that%
\begin{equation*}
\left\Vert z_{s}-u_{s}\right\Vert +\left\Vert u_{s}-v_{i+1}\right\Vert \leq
\left\Vert z_{s}-v_{i+1}\right\Vert +(3j-1)\varepsilon _{i+1},1\leq s\leq t.
\end{equation*}%
Hence%
\begin{align*}
& \Vert \sum_{s=1}^{t}\lambda _{s}z_{s}-u\Vert +\left\Vert
u-v_{i+1}\right\Vert \leq \sum_{s=1}^{t}\lambda _{s}\left\Vert
z_{s}-v_{i+1}\right\Vert +(3j-1)\varepsilon _{i+1} \\
& \leq 1+(3j-1)\varepsilon _{i+1}<\Vert \sum_{s=1}^{t}\lambda
_{s}z_{s}-v_{i+1}\Vert +3j\varepsilon _{i+1},
\end{align*}%
since, by (iii), $\Vert \sum_{s=1}^{t}\lambda _{s}z_{s}-v_{i+1}\Vert
>1-\varepsilon _{i+1}.$

By induction on $j$, the lemma follows.
\end{proof}

\begin{lemma}
\label{close2} Let $K$ be a subset of a direct sum $X\oplus _{Z}Y$ endowed
with a strictly monotone norm. Under the assumptions of Lemma \ref{sets},
for every positive integer $k$, there exist a sequence $(\varepsilon
_{n}(k)) $ in $(0,1),$ a subsequence $(v_{n}(k))=(x_{n}(k),y_{n}(k))$ of $%
(w_{n})$ and a family $\left\{ D_{j}^{i}(k)\right\} _{1\leq j\leq k,i\geq 1}$
of relatively compact convex subsets of $K$ such that $\left\Vert
b\right\Vert <\frac{1}{k}$ for every $u=(a,b)\in \bigcup\nolimits_{i\mathbb{=%
}1}^{\infty }D_{k}^{i}(k).$
\end{lemma}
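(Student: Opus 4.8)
The plan is to fix the integer $k$ once and for all and to quantify how strict monotonicity of the $Z$-norm converts the metric ``betweenness'' supplied by Lemma~\ref{close1} into control of the $Y$-coordinate. Since $Z=(\mathbb{R}^{2},\Vert\cdot\Vert_{Z})$ is two-dimensional and strictly monotone, a compactness argument yields a nondecreasing modulus $\omega\colon[0,\infty)\to[0,\infty)$ with $\omega(\eta)>0$ for $\eta>0$ such that
\begin{equation*}
\Vert(s,t-\eta)\Vert_{Z}\le\Vert(s,t)\Vert_{Z}-\omega(\eta)\qquad(0\le s\le 2,\ 0\le\eta\le t\le2).
\end{equation*}
Here all coordinates stay in $[0,2]$ because $\diam K=1$ and $X$, $Y$ embed isometrically into the direct sum. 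First I would record the consequence I actually use. Suppose $u=(a,b)$, $z=(c,d)$, $v_{i+1}=(x_{i+1},y_{i+1})$ lie in $K$ and
\begin{equation*}
\Vert z-u\Vert+\Vert u-v_{i+1}\Vert\le\Vert z-v_{i+1}\Vert+\delta .
\end{equation*}
Writing $P=\Vert c-a\Vert$, $Q=\Vert d-b\Vert$, $R=\Vert a-x_{i+1}\Vert$, $S=\Vert b-y_{i+1}\Vert$ and using the triangle inequality in $X$ and in $Y$ together with monotonicity gives $\Vert z-v_{i+1}\Vert\le\Vert(P+R,\,V)\Vert_{Z}$ with $V=\Vert d-y_{i+1}\Vert\le Q+S$; applying the modulus to the $Y$-defect $\eta=(Q+S)-V$ and then the $Z$-triangle inequality yields $\Vert z-v_{i+1}\Vert\le\Vert z-u\Vert+\Vert u-v_{i+1}\Vert-\omega(\eta)$. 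Comparing with the displayed hypothesis forces $\omega(\eta)\le\delta$, hence the $Y$-defect is at most any $\eta$ with $\omega(\eta)>\delta$, and in particular $\Vert b\Vert\le\Vert d\Vert+2\Vert y_{i+1}\Vert+\eta$.

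With this mechanism isolated, I would choose the sequence $(\varepsilon_{n}(k))$ in advance, decreasing so rapidly that, setting $\delta_{i+1}=3(k-1)\varepsilon_{i+1}$ and picking $\eta_{i+1}$ with $\omega(\eta_{i+1})>\delta_{i+1}$, one has
\begin{equation*}
k\varepsilon_{1}+\sum_{l\ge2}\bigl(2\varepsilon_{l}+\eta_{l}\bigr)<\tfrac1k .
\end{equation*}
This is possible because $\omega$ and $k$ are already fixed, so $\eta_{l}\to0$ as $\varepsilon_{l}\to0$. I then invoke Lemma~\ref{sets} with this $(\varepsilon_{n}(k))$ to obtain $(v_{n})=(x_{n},y_{n})$ and the family $\{D_{j}^{i}\}$, and I set $m_{i}=\sup\{\Vert b\Vert:(a,b)\in D_{k}^{i}\}$.

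The base case is a direct ball estimate. Since $\Vert Tv_{1}-v_{1}\Vert<\varepsilon_{1}$ by (i) and $T$ is nonexpansive, induction on $j$ gives $D_{j}^{1}\subset\{w:\Vert w-v_{1}\Vert\le(j-1)\varepsilon_{1}\}$: applying $T$ to a point of that convex ball moves it by at most $\varepsilon_{1}$ more, and the convex hull stays in the enlarged ball. Hence every $(a,b)\in D_{k}^{1}$ satisfies $\Vert b-y_{1}\Vert\le\Vert(a,b)-v_{1}\Vert<(k-1)\varepsilon_{1}$, so $m_{1}\le k\varepsilon_{1}$ by (ii). For the inductive step, given $(a,b)\in D_{k}^{i+1}$, Lemma~\ref{close1} with $j=k$ produces $z=(c,d)\in D_{k}^{i}$ satisfying the betweenness inequality with $\delta=3(k-1)\varepsilon_{i+1}$; the first paragraph then gives $\Vert b\Vert\le\Vert d\Vert+2\Vert y_{i+1}\Vert+\eta_{i+1}\le m_{i}+2\varepsilon_{i+1}+\eta_{i+1}$, whence $m_{i+1}\le m_{i}+2\varepsilon_{i+1}+\eta_{i+1}$. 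Telescoping and the choice of $(\varepsilon_{n}(k))$ yield $m_{i}\le k\varepsilon_{1}+\sum_{l\ge2}(2\varepsilon_{l}+\eta_{l})<\frac1k$ for every $i$, which is exactly the assertion.

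I expect the one genuinely delicate point to be the quantitative strict-monotonicity step of the first paragraph: one must verify that the auxiliary inequalities $U\le P+R$ and $V\le Q+S$ hold, that all coordinates are confined to a fixed square (here $\diam K=1$ is essential), and that the compactness extraction of $\omega$ with $\omega(\eta)>0$ for $\eta>0$ really uses strictness rather than mere monotonicity of the norm. Everything after that is additive bookkeeping, rendered summable by the single a priori choice of the rapidly decreasing sequence $(\varepsilon_{n}(k))$.
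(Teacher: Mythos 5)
Your proposal is correct and follows essentially the same route as the paper: both arguments extract, by compactness of the two-dimensional space $Z$, a uniform quantitative modulus from strict monotonicity (your ``forward'' modulus $\omega$ is equivalent to the paper's ``inverse'' modulus $\delta(\varepsilon)$), choose $(\varepsilon_{n}(k))$ in advance so small relative to this modulus that the errors are summable below $\frac{1}{k}$, and then combine Lemma \ref{sets} with the betweenness inequality of Lemma \ref{close1} to telescope the bound $\Vert b\Vert\leq\Vert d\Vert+2\Vert y_{i+1}\Vert+\eta_{i+1}$ down to $D_{k}^{1}(k)$, whose points lie within $(k-1)\varepsilon_{1}$ of $v_{1}$. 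Your write-up is, if anything, slightly more careful than the paper's on two points it glosses over: the explicit ball estimate $D_{j}^{1}\subset\{w:\Vert w-v_{1}\Vert\leq(j-1)\varepsilon_{1}\}$ for the base case, and the confinement of the modulus to the square $[0,2]^{2}$ rather than the unit ball of $Z$.
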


\begin{proof}
Since $Z=(\mathbb{R}^{2},\left\Vert \cdot \right\Vert _{Z})$ is a finite
dimensional space and the norm $\left\Vert \cdot \right\Vert _{Z}$ is strictly
monotone, for every $%
\varepsilon >0,$ there exists $\delta (\varepsilon )>0$ such that if $(\bar{a%
},\bar{b}),(\bar{a},\bar{c})$ belong to the unit ball $B_{Z}$ and $%
\left\Vert (\bar{a},\bar{b})\right\Vert <\left\Vert (\bar{a},\bar{c}%
)\right\Vert +\delta (\varepsilon ),$ then $\left\Vert \bar{b}\right\Vert
<\left\Vert \bar{c}\right\Vert +\varepsilon .$
Fix $k\geq 1,$ $\eta =\frac{1}{4k}$ and choose
\begin{equation*}
\varepsilon _{i}=\varepsilon _{i}(k)<\min \left\{ \frac{\delta (\eta ^{i})}{%
3k},\frac{\eta ^{i}}{k}\right\} ,\ \ i\geq 1.
\end{equation*}%
By Lemma \ref{sets}, there exist a sequence $(v_{n}(k))=(x_{n}(k),y_{n}(k))$
and a family $\left\{ D_{j}^{i}(k)\right\} _{1\leq j\leq k,i\geq 1}$ of
relatively compact convex subsets of $K$ with the properties described in
this lemma.

Let $u=(a,b)\in D_{k}^{i}(k),i\geq 2.$ It follows from Lemma \ref{close1}
that there exists $z=(x,y)\in D_{k}^{i-1}(k)$ such that
\begin{equation*}
\left\Vert z-u\right\Vert +\left\Vert u-v_{i}(k)\right\Vert \leq \left\Vert
z-v_{i}(k)\right\Vert +3(k-1)\varepsilon _{i}<\left\Vert
z-v_{i}(k)\right\Vert +3k\varepsilon _{i}.
\end{equation*}%
Hence
\begin{equation*}
\left\Vert (\left\Vert x-x_{i}(k)\right\Vert ,\left\Vert y-b\right\Vert
+\left\Vert b-y_{i}(k)\right\Vert )\right\Vert <\left\Vert (\left\Vert
x-x_{i}(k)\right\Vert ,\left\Vert y-y_{i}(k)\right\Vert )\right\Vert
+3k\varepsilon _{i}
\end{equation*}%
which yields
\begin{equation*}
\left\Vert y-b\right\Vert +\left\Vert b-y_{i}(k)\right\Vert <\left\Vert
y-y_{i}(k)\right\Vert +\eta ^{i}.
\end{equation*}%
Consequently,
\begin{equation*}
\left\Vert b\right\Vert <\left\Vert y\right\Vert +\left\Vert
y_{i}(k)\right\Vert +\frac{1}{2}\eta ^{i}.
\end{equation*}%
By induction with respect to $i$, there exists $(\bar{x},\bar{y})\in
D_{k}^{1}(k)$ such that
\begin{equation*}
\left\Vert b\right\Vert <\left\Vert \bar{y}\right\Vert +(\varepsilon
+...+\varepsilon _{i})+\frac{1}{2}(\eta +...+\eta ^{i})<k\varepsilon
_{1}+2\eta +\eta <4\eta =\frac{1}{k}.
\end{equation*}
\end{proof}

We are now in a position to prove the main theorem.

\begin{theorem}
\label{Th1}Let $X$ be a Banach space with WFPP and $Y$ has the GGLD
property. Then $X\oplus _{Z}Y$ with respect to a strictly monotone norm has
WFPP.
\end{theorem}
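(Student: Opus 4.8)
The plan is to argue by contradiction, extracting from a fixed-point-free nonexpansive map on $X\oplus_Z Y$ a fixed-point-free nonexpansive map on a weakly compact convex subset of $X$ itself; since $X$ has WFPP, this is impossible.

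First I would set up the standard minimal-set machinery recalled above. Suppose $X\oplus_Z Y$ fails WFPP, so there is a nonexpansive $T\colon C\to C$ without a fixed point on a nonempty weakly compact convex $C$. By the Kuratowski--Zorn argument, choose a minimal $T$-invariant weakly compact convex set $K$ that is not a singleton, and, after replacing $K$ by $(\diam K)^{-1}K$ and rescaling $T$ accordingly, assume $\diam K=1$. Pick an approximate fixed point sequence $(w_{n})=((w_{n}^{\prime},w_{n}^{\prime\prime}))$ in $K$; passing to a subsequence it converges weakly to some point of $K$, and after translating $K$ and conjugating $T$ by that translation I may assume $w_{n}\rightharpoonup (0,0)\in K$.

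Next I would feed this into Lemma~\ref{y=0}. By the Goebel--Karlovitz lemma, $\lim_{n}\Vert w_{n}-x\Vert =\diam K=1$ for every $x\in K$. Taking $x=(0,0)$ gives $\lim_{n}\Vert w_{n}\Vert =1$, and for each fixed $m$ one gets $\lim_{n}\Vert w_{n}-w_{m}\Vert =1$; since $\Vert w_{n}-w_{m}\Vert \leq 1$ always, the double limit $\lim_{n,m\to\infty,\,n\neq m}\Vert w_{n}-w_{m}\Vert$ equals $1=\lim_{n}\Vert w_{n}\Vert$. Hence the hypotheses of Lemma~\ref{y=0} hold and $\lim_{n}\Vert w_{n}^{\prime\prime}\Vert =0$, so $(w_{n})$ satisfies all the assumptions of Lemma~\ref{sets}. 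Now fix $k$ and apply Lemma~\ref{close2} to obtain a subsequence $(v_{n}(k))$ and the family $\{D_{j}^{i}(k)\}$, and put $C_{k}=\overline{\conv}\bigcup_{i}D_{k}^{i}(k)$. Then $C_{k}$ is a weakly compact convex subset of $K$, it contains $(0,0)$ and the whole sequence $(v_{i}(k))_{i}$, and by Lemma~\ref{close2} every $u=(a,b)\in C_{k}$ has $\Vert b\Vert \leq \frac{1}{k}$ (the bound passes to convex combinations and to weak limits because $u\mapsto P_Y u=b$ is weakly continuous and $\Vert\cdot\Vert$ is weakly lower semicontinuous). Moreover $\varepsilon_{i}(k)\to 0$ as $i\to\infty$ and $v_{i}(k)\in D_{k}^{i}(k)$, so $(v_{i}(k))_{i}$ is an approximate fixed point sequence for $T$ lying in $C_{k}$. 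Crucially, the recursion $D_{j+1}^{i}=\conv(D_{j}^{i}\cup T(D_{j}^{i}))$ forces $C_{k}$ to contain the nested $T$-images coming from the lower levels, so that as $k\to\infty$ the sets $C_{k}$ absorb more and more of the action of $T$ while their $Y$-components shrink to $0$.

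To finish, I would pass to the limit: forming the set of weak limits $\lim_{\mathcal U}u_{k}$ with $u_{k}\in C_{k}$ along a nonprincipal ultrafilter $\mathcal U$ (equivalently, a diagonal weak-limit construction), one obtains a nonempty weakly compact convex set $C_{\infty}\subset X\times\{0\}$, the $Y$-components vanishing in the limit since $\Vert P_Y u_{k}\Vert \leq \frac{1}{k}\to 0$. Identifying $X\times\{0\}$ isometrically with $X$ and transporting $T$ produces a nonexpansive self-map of a weakly compact convex subset of $X$ without a fixed point, contradicting WFPP of $X$. The main obstacle --- and the only place where the full multi-level construction of Lemmas~\ref{sets}--\ref{close2} is really needed --- is to verify that $C_{\infty}$ is genuinely $T$-invariant while remaining inside $X\times\{0\}$: nonexpansive maps are not weakly continuous, so invariance cannot be read off the weak limit directly, and one must use that each $C_{k}$ already contains the images $\conv(D_{j}^{i}\cup T(D_{j}^{i}))$ together with the Goebel--Karlovitz equalities to control distances in the limit. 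Once invariance is secured, strict monotonicity of the $Z$-norm guarantees that distances within $C_{\infty}$ are computed from the $X$-coordinates alone, so the transported map is honestly nonexpansive and the contradiction is complete.
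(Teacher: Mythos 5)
Your setup through Lemma \ref{close2} coincides with the paper's, but the endgame contains a genuine gap --- one you flag yourself and then assume away. You never prove that $C_{\infty}$ is $T$-invariant, and the sentence ``once invariance is secured'' is precisely the crux of the whole theorem. The top-level sets are not invariant: the construction of Lemma \ref{sets} only gives $T(D_{j}^{i})\subset D_{j+1}^{i}$ for $j<k$, so $T(C_{k})\not\subset C_{k}$, and no mechanism is offered for why an ultrafilter limit of non-invariant sets should be invariant; as you note, $T$ is not weakly continuous, so invariance cannot pass through the weak limits defining $C_{\infty}$, and the vague appeal to ``Goebel--Karlovitz equalities to control distances in the limit'' is not an argument. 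Without a nonempty weakly compact convex $T$-invariant set inside $X\times\{0\}$ there is no fixed-point-free nonexpansive map on a subset of $X$, and the contradiction with WFPP of $X$ never materializes. (There is also a smaller slip earlier: the fixed-$m$ limits $\lim_{n}\Vert w_{n}-w_{m}\Vert=1$ together with $\Vert w_{n}-w_{m}\Vert\leq 1$ do not imply that the double limit exists --- pairs such as $(n,n+1)$ could stay bounded away from $1$ --- so before invoking Lemma \ref{y=0} you must first pass to a subsequence along which the double limit exists, which is what the paper's ``without loss of generality'' does.)

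The paper closes the argument in the opposite direction from yours: instead of extracting an invariant set as a limit of the $C_{k}$'s, it fixes one canonical sequence of sets independent of $k$, namely $C_{0}=\{(0,0)\}$ and $C_{j}=\conv(C_{j-1}\cup T(C_{j-1}))$, observes that $\cl(\bigcup_{j\geq 1}C_{j})$ is closed, convex and $T$-invariant, hence equals $K$ by minimality, and then shows by induction on $j$ that $C_{j}\subset \cl(\bigcup_{i\geq 1}D_{j+1}^{i}(k))\subset \cl(\bigcup_{i\geq 1}D_{k}^{i}(k))$ for every $k>j$. The base case is $(0,0)\in\cl(\bigcup_{i}D_{1}^{i}(k))$, because $v_{n}(k)\rightharpoonup(0,0)$ and $\bigcup_{i}D_{1}^{i}(k)$ is an increasing union of convex sets, so its norm closure is weakly closed; the inductive step uses exactly the relation $T(D_{j}^{i})\subset D_{j+1}^{i}$ that your limit construction cannot exploit. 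Consequently every point of $C_{j}$ has $Y$-component of norm at most $1/k$ for all $k>j$, i.e.\ equal to $0$, whence $K=\cl(\bigcup_{j}C_{j})\subset X\times\{0\}$ is isometric (up to the scaling $\Vert(x,0)\Vert=\Vert(\Vert x\Vert,0)\Vert_{Z}$) to a subset of $X$, contradicting WFPP of $X$. Note that this route needs no ultrafilter, no limit of sets, and uses minimality of $K$ as the sole source of invariance --- replacing the one step your proposal leaves open.
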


\begin{proof}
Assume that $X\oplus _{Z}Y$ does not have WFPP. Then, there exist a weakly
compact convex subset $C$ of $X\oplus _{Z}Y$ and a nonexpansive mapping $%
T:C\rightarrow C$ without a fixed point. By the Kuratowski-Zorn lemma, there
exists a convex and weakly compact set $K\subset C$ which is minimal
invariant under $T$ and which is not a singleton. Let $(w_{n})=((w_{n}^{%
\prime },w_{n}^{\prime \prime }))$ be an approximate fixed point sequence
for $T$ in $K$, i.e., $\lim_{n\rightarrow \infty }\Vert Tw_{n}-w_{n}\Vert =0$%
. Without loss of generality we can assume that $\diam K=1$, $(w_{n})$
converges weakly to $(0,0)\in K$ and the double limit $\lim_{n,m\rightarrow
\infty ,n\neq m}\Vert w_{n}-w_{m}\Vert $ exists. It follows from the
Goebel-Karlovitz lemma that
\begin{equation}
\lim_{n,m\rightarrow \infty ,n\neq m}\Vert w_{n}-w_{m}\Vert
=\lim_{n\rightarrow \infty }\Vert w_{n}\Vert =1.
\end{equation}%
Applying Lemma \ref{y=0} gives $\lim_{n\rightarrow \infty }\Vert
w_{n}^{\prime \prime }\Vert =0.$ Lemma \ref{close2} now shows that for every
positive integer $k,$ there exist a subsequence $%
(v_{n}(k))=(x_{n}(k),y_{n}(k))$ of $(w_{n})$ and a family $\left\{
D_{j}^{i}(k)\right\} _{1\leq j\leq k,i\geq 1}$ of relatively compact convex
subsets of $K$ such that $\left\Vert b\right\Vert <\frac{1}{k}$ for every $%
u=(a,b)\in \bigcup\nolimits_{i\mathbb{=}1}^{\infty }D_{k}^{i}(k).$

Let $C_{0}=\{(0,0)\}$ and $C_{j}=\conv(C_{j-1}\cup T(C_{j-1}))$ for $j\geq 1.
$ It is not difficult to see that $\cl(\bigcup\nolimits_{j\mathbb{=}%
1}^{\infty }C_{j})$ is a closed convex subset of $K$ which is invariant for $%
T$ (and hence equals $K$). Fix $k\geq 1$ and notice that $(0,0)\in \cl(
\bigcup\nolimits_{i\mathbb{=}1}^{\infty }D_{1}^{i}(k))$, because a sequence $%
(v_{n}(k))_{n\geq 1}$ converges weakly to $(0,0).$ Furthermore,
\begin{equation*}
T(\cl(\bigcup\nolimits_{i\mathbb{=}1}^{\infty }D_{j}^{i}(k)))=\cl%
(\bigcup\nolimits_{i\mathbb{=}1}^{\infty }T(D_{j}^{i}(k)))\subset \cl%
(\bigcup\nolimits_{i\mathbb{=}1}^{\infty }D_{j+1}^{i}(k))
\end{equation*}
and hence, by induction on $j,$
\begin{equation*}
C_{j}\subset \cl(\bigcup\nolimits_{i\mathbb{=}1}^{\infty
}D_{j+1}^{i}(k))\subset \cl(\bigcup\nolimits_{i\mathbb{=}1}^{\infty
}D_{k}^{i}(k)),\ j<k.
\end{equation*}
It follows that if $(x,y)\in C_{j}$ and $j<k,$ then $\left\Vert y\right\Vert
\leq \frac{1}{k}.$ Since $k$ is arbitrary, $y=0$ for every $(x,y)\in \cl%
(\bigcup\nolimits_{j\mathbb{=}1}^{\infty }C_{j})=K.$ Therefore, $K$ is
isometric to a subset of $X$. Since $X$ has WFPP, $T$ has a fixed point in $%
K,$ which contradicts our assumption.
\end{proof}

\section{Consequences}

In this section, we list some consequences of Theorem \ref{Th1}. Notice that
in the case of reflexive spaces, the properties FPP and WFPP coincide.
Furthermore, if a Banach space $Y$ has uniform normal structure ($\Nor(Y)>1$%
), then $Y$ is reflexive and has FPP. In the remainder of this section, $%
X\oplus _{Z}Y$ denotes a direct sum of Banach spaces $X$ and $Y$ with
respect to a strictly monotone norm.

\begin{corollary}
Suppose $X$ is a reflexive Banach space with FPP and $Y$ has uniform normal
structure. Then $X\oplus _{Z}Y$ has FPP.
\end{corollary}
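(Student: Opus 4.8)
The plan is to reduce the statement to an application of Theorem~\ref{Th1}. The theorem requires that $X$ have WFPP and that $Y$ have the GGLD property; the conclusion then yields WFPP for $X\oplus_Z Y$. So my first task is to verify that the hypotheses of the corollary imply these two conditions, and my second task is to bridge the gap between WFPP and the stronger FPP claimed in the corollary.

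First I would handle the hypotheses on $Y$. By assumption $Y$ has uniform normal structure, which by definition means $\Nor(Y)>1$. The implication chain recorded in Section~2 states that $\Nor(X)>1 \Rightarrow \WCS(X)>1 \Rightarrow \text{GGLD} \Rightarrow$ weak normal structure. Applying this to $Y$ gives immediately that $Y$ has the GGLD property. Next I would handle $X$: since $X$ is reflexive with FPP, and since in reflexive spaces the weakly compact convex sets are exactly the bounded closed convex sets, FPP and WFPP coincide, as noted in the remark opening Section~4. Hence $X$ has WFPP. At this point both hypotheses of Theorem~\ref{Th1} are satisfied, so $X\oplus_Z Y$ has WFPP.

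It remains to upgrade WFPP to FPP for the direct sum, and this is the step I expect to be the only substantive obstacle. The key observation, also recorded at the start of Section~4, is that $Y$ having uniform normal structure forces $Y$ to be reflexive. Since $X$ is reflexive by assumption and $Y$ is reflexive, the direct sum $X\oplus_Z Y$ is reflexive as well (a finite direct sum of reflexive spaces is reflexive, independently of the particular monotone norm on $Z$). In a reflexive space, every bounded closed convex set is weakly compact, so FPP and WFPP coincide for $X\oplus_Z Y$ exactly as they do for $X$.

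Therefore the WFPP obtained from Theorem~\ref{Th1} is in fact FPP, and the corollary follows. The only point requiring a little care is the reflexivity of the direct sum, but this is a standard fact: the norm on $Z=(\mathbb{R}^2,\|\cdot\|_Z)$ is equivalent to any of the finitely many standard norms, so $X\oplus_Z Y$ is linearly homeomorphic to a finite product of reflexive spaces and hence reflexive. No delicate estimate is needed beyond assembling these implications in the right order.
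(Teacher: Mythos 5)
Your proposal is correct and follows essentially the same route as the paper: the opening remarks of Section~4 supply exactly the ingredients you use (FPP and WFPP coincide in reflexive spaces; uniform normal structure implies reflexivity of $Y$; and the chain $\Nor(Y)>1\Rightarrow\WCS(Y)>1\Rightarrow$ GGLD from Section~2), after which Theorem~\ref{Th1} yields the conclusion. Your explicit verification that $X\oplus_Z Y$ is reflexive is the standard fact the paper leaves implicit, and it is handled correctly.
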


In particular, the above corollary is valid if $X$ is a uniformly nonsquare
or a uniformly noncreasy Banach space.

\begin{corollary}
Suppose $X$ is a Banach space with WFPP and $Y$ satisfies Bynum's condition $%
\WCS(Y)>1$. Then $X\oplus _{Z}Y$ has WFPP.
\end{corollary}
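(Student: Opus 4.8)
The plan is to reduce this corollary directly to the Main Theorem (Theorem \ref{Th1}) by observing that Bynum's condition is simply a stronger hypothesis than what that theorem requires of $Y$. The essential point is the implication chain already recorded in Section 2, namely that $\WCS(Y)>1 \Rightarrow$ GGLD $\Rightarrow$ weak normal structure, which holds for any Banach space without further assumptions.

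First I would invoke this chain: the assumption $\WCS(Y)>1$ immediately guarantees that $Y$ has the generalized Gossez-Lami Dozo property. Indeed, if $(x_n)$ converges weakly to $0$ with $\lim_n \Vert x_n\Vert = 1$, then weak uniform normal structure forces the asymptotic quantities to satisfy $\diam_a(x_n)/r_a(x_n) \geq \WCS(Y) > 1$, and this lower bound translates into the strict inequality $\limsup_m \limsup_n \Vert x_n - x_m\Vert > 1$ defining GGLD. Since the excerpt explicitly asserts this implication in the preliminaries, I would simply cite it rather than reprove it.

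Having established that $Y$ enjoys the GGLD property, the hypotheses of Theorem \ref{Th1} are met verbatim: $X$ has WFPP and $Y$ has GGLD. Applying that theorem yields that $X \oplus_Z Y$ with respect to a strictly monotone norm has WFPP, which is exactly the assertion. There is no real obstacle in this argument; the entire content lies in recognizing that Bynum's condition sits above GGLD in the hierarchy of Section 2, so the corollary is an immediate specialization of the main result.
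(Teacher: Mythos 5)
Your proposal is correct and matches the paper's own (implicit) argument exactly: the paper states the corollary without proof precisely because the chain $\WCS(Y)>1\Rightarrow$ GGLD recorded in Section 2 (with reference to \cite{SiSm}) reduces it immediately to Theorem \ref{Th1}. Your brief sketch of the implication is slightly loose (passing from $\diam_a(x_n)/r_a(x_n)\geq \WCS(Y)$ to the $\limsup$ condition needs the standard subsequence/double-limit argument), but since you ultimately cite the stated implication rather than rely on the sketch, the proof stands as given.
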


It is well known that all finite dimensional spaces have uniform normal
structure. A very particular case of the above corollary answers a question
of M. A. Khamsi (see \cite[p. 999]{Kh}) for strictly monotone norms.

\begin{corollary}
\label{3}Suppose $X$ is a Banach space with WFPP and $Y$ is a finite
dimensional space. Then $X\oplus _{Z}Y$ has WFPP.
\end{corollary}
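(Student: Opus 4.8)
The plan is to obtain this as an immediate consequence of Theorem \ref{Th1}; the only thing that needs checking is that a finite-dimensional space $Y$ enjoys the GGLD property, after which the strict monotonicity of the norm together with the WFPP of $X$ lets the main theorem do all the work.

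First I would record the elementary fact that every finite-dimensional normed space has the Schur property, so that weak and norm convergence coincide in $Y$. In particular, if a sequence $(y_{n})$ in $Y$ converges weakly to $0$, then $\lim_{n\rightarrow \infty }\Vert y_{n}\Vert =0$. Consequently there is \emph{no} sequence $(y_{n})$ with $y_{n}\rightarrow 0$ weakly and $\lim_{n\rightarrow \infty }\Vert y_{n}\Vert =1$, so the hypothesis in the definition of GGLD is never met and $Y$ has the GGLD property vacuously. Alternatively, as suggested by the remark preceding the statement, one may invoke the standard fact that every finite-dimensional space has uniform normal structure, $\Nor(Y)>1$, and then apply the implication $\Nor(Y)>1\Rightarrow$ GGLD recalled in Section~2.

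With $Y$ known to have the GGLD property and $X$ assumed to have WFPP, Theorem \ref{Th1} applies directly to the direct sum $X\oplus _{Z}Y$ taken with respect to any strictly monotone norm, yielding that $X\oplus _{Z}Y$ has WFPP. There is essentially no obstacle here, since all of the substance is already contained in Theorem \ref{Th1} (and, through it, in Lemmas \ref{y=0}--\ref{close2}). It is worth noting only that in the finite-dimensional case the machinery simplifies: Lemma \ref{y=0} becomes trivial, because once $(w_{n})=((w_{n}^{\prime },w_{n}^{\prime \prime }))$ tends weakly to $(0,0)$ the second coordinates $w_{n}^{\prime \prime }$ lie in the Schur space $Y$ and hence converge to $0$ in norm with no appeal to the double-limit condition. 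Thus the finite-dimensional corollary is precisely the degenerate instance of the main theorem in which the coordinate $Y$ carries no weakly-but-not-norm convergent sequences at all.
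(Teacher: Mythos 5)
Your proposal is correct and follows essentially the same route as the paper: the paper also obtains this corollary as an immediate instance of Theorem \ref{Th1}, deducing GGLD for finite-dimensional $Y$ from uniform normal structure via the chain $\Nor(Y)>1\Rightarrow\WCS(Y)>1\Rightarrow$ GGLD, which is exactly your stated alternative. Your primary argument --- that $Y$ has the Schur property, so the paper's definition of GGLD is satisfied vacuously --- is a valid and slightly more direct verification of the same hypothesis, and your observation that Lemma \ref{y=0} becomes trivial in this case is accurate.
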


A Banach space $X$ with the property that $X\oplus _{1}\mathbb{R}$ has WFPP
has been studied in \cite{KRS}. The following theorem was established for
the $\ell _{1}^{2}$-norm but the proof is valid for all strictly monotone norms.

\begin{theorem}
[see {\cite[Theorem 1]{KRS}}] {\label{Ku}} Suppose $X$ is a Banach space such that $%
X\oplus _{Z}\mathbb{R}$ has WFPP. Let $Y$ be a Banach space which is
uniformly convex in every direction. Then $X\oplus _{Z}Y$ has WFPP.
\end{theorem}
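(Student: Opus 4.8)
The plan is to run the minimal--invariant--set argument of Theorem \ref{Th1}, but to replace the GGLD input (Lemma \ref{y=0}, which forces the $Y$-component of the approximate fixed point sequence to vanish) by a rigidity argument based on the directional uniform convexity of $Y$. Assume $X\oplus_Z Y$ fails WFPP. As in Theorem \ref{Th1}, the Kuratowski--Zorn lemma yields a minimal $T$-invariant weakly compact convex set $K\subset X\oplus_Z Y$ that is not a singleton; normalising, $\diam K=1$, the approximate fixed point sequence $w_n=(x_n,y_n)$ converges weakly to $(0,0)\in K$, and (passing to a subsequence) the double limit $\lim_{n\neq m}\|w_n-w_m\|$ exists. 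The Goebel--Karlovitz lemma gives $\lim_n\|w_n-u\|=\diam K=1$ for every $u\in K$ and $\lim_{n\neq m}\|w_n-w_m\|=\lim_n\|w_n\|=1$. Since $Y$ need not be GGLD we cannot expect $\|y_n\|\to 0$; instead the goal is to confine the second coordinates of $K$ to a single line.

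The core of the argument is the following rigidity, which is where uniform convexity in every direction enters. For $u=(a,b)\in K$ write $\alpha(a)=\limsup_n\|x_n-a\|$ and $\beta(b)=\limsup_n\|y_n-b\|$; both are convex functions of their arguments, and by the Goebel--Karlovitz identity $\|(\alpha(a),\beta(b))\|_Z=1$ for every $(a,b)\in K$. Given two points $(a_1,b_1),(a_2,b_2)\in K$ with $b_1\neq b_2$, their midpoint lies in $K$; the $X$-coordinate satisfies $\alpha\big(\tfrac{a_1+a_2}{2}\big)\le\tfrac12(\alpha(a_1)+\alpha(a_2))$ by convexity, while applying the definition of UCED to the normalised vectors $(y_n-b_1)/R$ and $(y_n-b_2)/R$ (with $R$ the larger of the two distances and fixed direction $b_1-b_2$) yields a definite contraction of the second coordinate, $\beta\big(\tfrac{b_1+b_2}{2}\big)\le\max\{\beta(b_1),\beta(b_2)\}\,(1-\delta)$ with $\delta=\delta(\|b_1-b_2\|)>0$. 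When $\beta(b_1)=\beta(b_2)$ this pushes $\beta$ at the midpoint strictly below the common value; combined with the convexity estimate for $\alpha$ and the identities $\|(\alpha(a_i),\beta(b_i))\|_Z=1$, strict monotonicity of $\|\cdot\|_Z$ forces the $Z$-norm of $(\alpha,\beta)$ evaluated at the midpoint below $1$, contradicting Goebel--Karlovitz. Hence $b\mapsto\beta(b)$ is injective on the $Y$-projection of $K$, and (together with the uniqueness of asymptotic centres that UCED guarantees) the second coordinates of $K$ are pinned down by a single scalar parameter; the decisive step is to upgrade this to the geometric conclusion that the $Y$-projection $P_Y(K)$ is contained in a line $\mathbb{R}e$ through the origin.

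Once $P_Y(K)\subset\mathbb{R}e$ is known, the reduction is immediate. The assignment $(x,te)\mapsto(x,t\|e\|)$ maps $K$ bijectively onto a weakly compact convex set $\widetilde K\subset X\oplus_Z\mathbb{R}$, and it is an isometry because $\|(x,te)\|=\|(\|x\|,|t|\,\|e\|)\|_Z$ equals the $X\oplus_Z\mathbb{R}$-norm of $(x,t\|e\|)$; being affine on the line $\mathbb{R}e$ it preserves convex combinations and weak compactness. Conjugating $T$ by this isometry produces a fixed point free nonexpansive self-map of $\widetilde K$, contradicting the hypothesis that $X\oplus_Z\mathbb{R}$ has WFPP. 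This is exactly the point at which the weaker assumption on $X$ is used, and it explains why the theorem asks for WFPP of $X\oplus_Z\mathbb{R}$ rather than of $X$ itself.

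I expect the main obstacle to be the promotion of the asymptotic rigidity to the geometric statement $P_Y(K)\subset\mathbb{R}e$. Uniform convexity in every direction is a directional hypothesis, so its modulus can be applied only after the relevant differences have been aligned along a fixed vector; since the differences $y_n-y_m$ (and the segments $b_1-b_2$) vary, one must first extract, by weak compactness and passage to subsequences, a single limiting direction $e$ and verify that the contraction estimate survives in the limit. Moreover, the midpoint computation by itself only yields that equal asymptotic $Y$-radii force equal second coordinates; obtaining that $P_Y(K)$ is an honest segment---rather than merely a set on which the asymptotic radius is injective---is the genuinely delicate part, and everything downstream (the isometric embedding and the appeal to WFPP of $X\oplus_Z\mathbb{R}$) is then routine.
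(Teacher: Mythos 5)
The paper itself contains no proof of this theorem: it is imported from Kuczumow--Reich--Schmidt \cite{KRS} with the remark that their $\ell_1^2$ argument works verbatim for every strictly monotone norm. Your outline does reconstruct that proof's strategy faithfully: minimal invariant set $K$, Goebel--Karlovitz, coordinatewise asymptotic radii, a UCED midpoint rigidity forcing the $Y$-projection of $K$ into a line, and the affine isometric transplantation of $K$ into $X\oplus_Z\mathbb{R}$ --- including the correct observation that this last step is exactly where WFPP of $X\oplus_Z\mathbb{R}$, rather than of $X$, is consumed. The final reduction paragraph is sound as written.

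However, there is a genuine gap exactly where you flag one: you never establish $P_Y(K)\subset\mathbb{R}e$, only that two points of $K$ with distinct second coordinates cannot have equal asymptotic $Y$-radii, and your guess that closing the gap requires ``extracting a single limiting direction by weak compactness'' misdiagnoses the difficulty --- in the midpoint estimate UCED is applied to $y_n-b_1$ and $y_n-b_2$, whose difference is the \emph{fixed} vector $b_2-b_1$ independent of $n$, so no alignment or limit of directions is ever needed. The missing step is instead elementary and topological: $P_Y(K)$ is convex and $\beta$ is $1$-Lipschitz, so if $P_Y(K)$ had affine dimension at least $2$ it would contain the boundary of a triangle, a topological circle, on which a continuous injective real-valued function cannot exist (by the intermediate value theorem some value is attained twice); hence injectivity of $\beta$ already forces $P_Y(K)$ to lie on a line, which passes through the origin because $(0,0)\in K$. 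A second, smaller flaw: with $\limsup$'s the identity $\Vert(\alpha(a),\beta(b))\Vert_Z=1$ can fail --- Goebel--Karlovitz controls the joint limit $\lim_n\Vert w_n-u\Vert$, which only yields $\Vert(\alpha(a),\beta(b))\Vert_Z\geq 1$, whereas your contradiction needs the upper bound at the endpoints $(a_i,b_i)$. You must replace $\limsup$'s by genuine limits holding simultaneously on $K$: a minimal invariant set is separable (the very construction $C_j=\conv(C_{j-1}\cup T(C_{j-1}))$ used in the proof of Theorem \ref{Th1} shows $K=\cl(\bigcup_j C_j)$), so a diagonal subsequence makes $\lim_n\Vert x_n-a\Vert$ and $\lim_n\Vert y_n-b\Vert$ exist on a countable dense set, and $1$-Lipschitzness extends this to all of $K$; then monotonicity and convexity of $\Vert\cdot\Vert_Z$, the UCED contraction, and strict monotonicity run exactly as you intend. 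With these two repairs your outline becomes a complete proof along the lines of \cite{KRS}.
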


Corollary \ref{3} and Theorem \ref{Ku} give the following result.

\begin{theorem}
\label{Th2}Suppose $X$ is a Banach space with WFPP and $Y$ is uniformly
convex in every direction. Then $X\oplus _{Z}Y$ has WFPP.
\end{theorem}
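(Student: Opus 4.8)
The plan is to chain the two immediately preceding results, using the scalar field $\mathbb{R}$ as an auxiliary factor. First I would observe that $\mathbb{R}$, being one-dimensional, is in particular a finite-dimensional Banach space. Hence Corollary \ref{3} applies with $\mathbb{R}$ playing the role of the second summand: since $X$ has WFPP by hypothesis, that corollary yields that $X\oplus _{Z}\mathbb{R}$ has WFPP with respect to the strictly monotone norm $\left\Vert \cdot \right\Vert_{Z}$.

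Next I would feed this intermediate conclusion into Theorem \ref{Ku}. The hypothesis of that theorem is precisely that $X\oplus _{Z}\mathbb{R}$ has WFPP, together with the assumption that the second summand is uniformly convex in every direction. The first hypothesis is now established by the previous step, and the second is exactly the standing assumption on $Y$ in the present statement. Therefore Theorem \ref{Ku} applies verbatim and gives that $X\oplus _{Z}Y$ has WFPP, which is the desired conclusion.

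The argument is thus a formal composition of Corollary \ref{3} and Theorem \ref{Ku} through the one-dimensional space $\mathbb{R}$, and I do not expect any genuine obstacle. The single point meriting attention is consistency of the norm: one must check that the strictly monotone norm $\left\Vert \cdot \right\Vert_{Z}$ produced by Corollary \ref{3} on $X\oplus _{Z}\mathbb{R}$ is the same norm required by Theorem \ref{Ku}, so that the two results compose without any renorming. Since both results are stated for the fixed norm $\left\Vert \cdot \right\Vert_{Z}$ on the coordinate space $Z=(\mathbb{R}^{2},\left\Vert \cdot \right\Vert_{Z})$, this matching is automatic, and the proof is complete.
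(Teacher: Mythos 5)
Your proposal is correct and is precisely the paper's own argument: the author derives Theorem \ref{Th2} by combining Corollary \ref{3} (applied with the one-dimensional summand $\mathbb{R}$) with Theorem \ref{Ku}, exactly as you do. Your extra check that the same strictly monotone norm $\left\Vert \cdot \right\Vert _{Z}$ is used in both steps is a fine point of care, though, as you note, it is automatic.
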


Recall that the James space $J$ is an example of a Banach space with the
GGLD property which is not uniformly convex in every direction and the space
$c_{0}$ with the norm%
\begin{equation*}
\left\Vert x\right\Vert =\sqrt{\left\Vert x\right\Vert _{\infty
}^{2}+\sum_{i=1}^{\infty }\frac{x_{i}^{2}}{2^{i}}}
\end{equation*}%
is an example of a Banach space which is uniformly convex in every direction
but fails the GGLD property (see \cite{GaLl} and references therein). This
shows that Theorem \ref{Th1} and Theorem \ref{Th2} are independent of each
other.

\begin{acknowledgement}
The author is greatly indebted to the referee for his valuable advice which
led to a substantial simplification of the original arguments and to a more
general result.
\end{acknowledgement}

\end{document}